
\documentclass[letterpaper, 10 pt, conference]{ieeeconf}  

\IEEEoverridecommandlockouts                              

\overrideIEEEmargins                                      


\usepackage{color}
\usepackage{amsmath}
\usepackage{tikz}
\usepackage{float}
\usepackage{algorithm}
\usepackage{algorithmic}
\usepackage{balance}

\newtheorem{theorem}{Theorem}
\newtheorem{corollary}{Corollary}
\newtheorem{definition}{Definition}
\newtheorem{lemma}{Lemma}

\newtheorem{prop}{Proposition}

\DeclareMathOperator*{\argmax}{argmax}
\DeclareMathOperator*{\argmin}{argmin}

\newcommand{\E}{\mathcal{E}}

\newcommand{\comment}[1]{}

\newcommand{\tr}[1]{\ensuremath{\textbf{tr}\left(#1\right)}}

\newcommand{\Ne}[1]{\ensuremath{{\mathcal N_#1}}}

\newcommand{\tp}{\ensuremath{^{\mathsf{T}}}}

\newcommand{\Econ}{\ensuremath{E_{con}}}
\newcommand{\Ec}{\ensuremath{E^c}}

\title{\LARGE \bf
Optimizing the Coherence of Composite Networks
}

\author{Erika Mackin and Stacy Patterson
    \thanks{This work was supported in part by NSF Grants CNS-1527287 and CNS-1553340.} 
\thanks{ Erika Mackin and Stacy Patterson are with Department of Computer Science, Rensselaer Polytechnic Institute, Troy, New York 12180, USA. Email:
    {\tt\small mackie2@rpi.edu, sep@cs.rpi.edu} } }


\begin{document}

\maketitle
\thispagestyle{empty}
\pagestyle{empty}

\begin{abstract}
We consider how to connect a set of disjoint networks to optimize the performance of the resulting composite network.
We quantify this performance by the coherence of the composite network, which is defined by an $H_2$ norm of the system.
Two dynamics are considered: noisy consensus dynamics with and without stubborn agents. 
For noisy consensus dynamics without stubborn agents, we derive analytical expressions for the coherence of composite networks in terms of the coherence of the individual networks and the structure of their interconnections.  We also identify  optimal interconnection topologies and give bounds on coherence for general composite graphs. 
 For noisy consensus dynamics with stubborn agents, we  develop  a non-combinatorial algorithm that identifies
connecting edges such that the composite network coherence closely approximates the performance of the optimal composite graph.

\end{abstract}

\section{INTRODUCTION}
Networked systems are becoming ever more important in today's highly connected world. We find such systems in power grids, vehicle networks, sensor networks, and so on.  A problem of particular interest is how to coordinate or synchronize these networks, and in addition, how robust this coordination 
or synchronization is to external disturbances.  With an understanding of the relationship between the network topology and this robustness, it becomes possible to modify a network's topology to optimize performance.

In this paper, we study topology design in networks that take the form of  composite graphs. A composite graph is one  that is formed from a set of disjoint subgraphs and a designed set of edges between them. We analyze these networks under two dynamics: noisy consensus dynamics and noisy consensus dynamics with stubborn agents. In both cases, we investigate how to choose edges to connect subgraphs to optimize the network coherence---a performance measure defined by the $H_2$ norm of the system.
For networks with noisy consensus dynamics and no stubborn agents,  we derive analytical expressions for the coherence of composite networks in terms of the coherence of the individual sub-networks and the structure of their interconnections. We then derive upper and lower bounds for the coherence of general composite graphs. For systems with noisy consensus dynamics with stubborn agents, we prove that coherence is a  submodular function of the edges added to a set of initially disjoint networks, and we use this result to create a greedy algorithm for choosing the connecting edge set for the network. This greedy algorithm yields an edge set that is within a provable bound of the performance of the optimal edge set.
%

Coherence has been used as a measure of network performance in several previous works, for example, \cite{YAOSS11,BJMP12}. \cite{ZSA13}, \cite{SSLD15},
and \cite{F15} describe algorithms and analysis for adding edges to an arbitrary graph to improve its coherence.
Modifying the edge weights within a graph is another approach for optimizing network coherence, which is used in \cite{GBS08, XBK07}. In all of these works, however, the authors consider only edge additions or modifications to a single graph.
Our focus, in contrast, is on how best to connect a set of disjoint subgraphs.
A system of interacting networks is considered in the works \cite{GBHS11,PB12}, where the performance is based on robustness to cascading and random failures.
In \cite{SGPP16}, the authors study the performance of a composite network in terms of the $H_2$ norm of the system, but they consider different dynamics than those presented in this paper. 

The remainder of this paper is structured as follows. Section~\ref{model.sec} describes our system model. Section III  gives analysis and formulas regarding the coherence of composite networks with noisy consensus dynamics. In Section IV, we consider noisy consensus dynamics with stubborn agents and present our greedy algorithm for connecting edge selection. Section V demonstrates the performance of our algorithm through a pair of numerical examples, followed by our conclusion in Section \ref{concl.sec}.

%
%
%


\section{System Model} \label{model.sec}
We consider a graph consisting of a set of $n$ disjoint subgraphs $\{G_1, \ldots, G_n\}$. Each subgraph $G_i = (V_i, E_i)$ is connected and undirected, with $n_i$ nodes.
The objective is to connect these $n$ subgraphs to form a connected \emph{composite graph} $G=(V,E)$, $|V|=N$, where:
\begin{align*}
V=&V_1\cup \ldots \cup V_n \\
E =& E_1 \cup \ldots \cup E_n \cup \Econ,
\end{align*}
where $\Econ$ is a set of undirected edges connecting the subgraphs, i.e.,
 $\Econ \subseteq \{(u,v)~|~ u\in V_i,~v \in V_j,~ j \neq i \}$.
The edge set $\Econ$ is to be selected so as to optimize a desired performance objective. 

The dynamics of each node $j \in V$ is given by
\begin{align}
\dot{x}_{j} = u_{j} + \nu_{j}, \label{nodedyn.eq}
\end{align}
where $u_{j}$ is the control input and $\nu_{j}$ is a zero-mean, unit variance, white stochastic disturbance.
We consider two types of dynamics, described below.  

\subsection{Noisy Consensus Dynamics}
We first consider noisy consensus dynamics, where each node updates its state based on the relative states of its neighbors.
The control input is given by:
\begin{align}
u_j = - \sum_{k \in \Ne{j}}(x_j - x_k). \label{conscontrol.eq}
\end{align}
The dynamics of the the network $G$ can be written as
\begin{align}
\dot{x} &= - L  x + \nu, \label{consdyn.eq}
\end{align}
where $L$ is the Laplacian matrix of the composite graph $G$, i.e.,
\[
L = \left[ \begin{array}{ccc}
L_1 & \hdots & 0 \\
\vdots & \ddots & \vdots \\
0 & \hdots  & L_n 
\end{array} \right] + L_{E_{con}},
\]
where $L_i$ is the Laplacian  matrix of  $G_i$, $i=1 \ldots n$, and $L_{E_{con}}$ is the Laplacian matrix of the graph consisting of all nodes in $V$ and only those edges in $E_{con}$.

We quantify the performance of the network by the \emph{network coherence}, which is defined as follows:
\[
H_C(G):=  \lim_{t \rightarrow \infty}  \sum_{j=1}^{N} \textbf{var}\left(x_j - \frac{1}{N}\sum_{k=1}^{N} x_k \right).
\]
The network coherence is the total steady-state variance of the deviations from the average of the current node states.
It has been shown that~\cite{BJMP12,YSL10}:
\[
H_C(G) = \frac{1}{2} \tr{L^{\dagger}},
\]
where  $L^{\dagger}$ is the pseudo-inverse of $L$.


\vspace{-.2cm}
\subsection{Stubborn Agent Dynamics}
We also consider noisy consensus dynamics with \emph{stubborn agents}.
The nodes execute a consensus law, each with some degree of stubbornness, as defined by the scalar  $d_j \geq 0$.
We assume that, in each subgraph, at least one $d_j$ is strictly greater than 0.  
The control input is given by:
\begin{align}
u_j = - \sum_{k \in \Ne{j}} (x_{j} - x_{k}) - d_{j} x_{j}. \label{nodecontrol.eq}
\end{align}

The dynamics of the composite network can be written as:
\begin{align}
\dot{x} &= - Q + \nu, \label{stubdyn.eq}
\end{align}
where
\[
Q = \left[ \begin{array}{ccc}
Q_1 & \hdots & 0 \\
\vdots & \ddots & \vdots \\
0 & \hdots  & Q_n 
\end{array} \right]+L_{E_{con}},
\]
Here, $Q_i = L_i + D_i$, where $D_i$ is the diagonal matrix of degrees of stubborness for graph $G_i$, and $L_{E_{con}}$ is as defined for the noisy consensus dynamics. 


We again quantify the performance of a graph $G$ by an $H_2$ norm,  
\[
H_S(G) = \tr{ \int_0^{\infty} e^{-2Qt}dt} = \frac{1}{2} \tr{ Q^{-1}}.
\]
Note that  if $G$ is connected and at least one $d_j > 0$, then $Q$ is positive definite~\cite{RJME09}.

The dynamics in (\ref{stubdyn.eq}) are a variation of the dynamics for noise-corrupted leaders presented in~\cite{LMJ14}, where each node $j$ with $d_j>0$ plays the role of a leader.  In our system, we allow that any number of agents may be leaders, including every node in the network.
These dynamics can also be given a different interpretation as leader-follower consensus dynamics with noise-free leaders, similar to those presented in~\cite{PB10,LMJ14,CBP14}.  
Let $G'$ be the graph formed from $G$ by adding a single node $s$  and creating an edge from
 each node $j$ in $V$ to $s$ with edge weight $d_j$. All other edge weights are equal to 1.
Let node $s$ be the single leader node, with noise-free dynamics, i.e. $\dot{x}_{s} = 0$,
and let all other nodes be follower nodes, governed by the dynamics in (\ref{nodedyn.eq}) with the control input in (\ref{nodecontrol.eq}).
Let $L'$ be the weighted Laplacian matrix of $G'$, and let $L'_f$ be the sub-matrix of $L'$ with the row and column corresponding to node $s$ removed.  It has been shown that the total steady-state variance of the deviation from the leader's state is given by~\cite{BH08}:
\[
H_f(G) = \frac{1}{2} \tr{{L'}_f^{-1}}.
\]
Observing that $L' = Q$, it holds that $H_S(G) = H_f(G)$.

\section{Composite Graphs with Noisy Consensus Dynamics}
We first consider systems with noisy consensus dynamics and no stubborn agents.  For such networks, we refine the definition
of a composite network in the following way. 
For each of the $n$ disjoint subgraphs $G_i$,
a single node $l_i \in V_i$ is used to connect $G_i$ to the other subgraphs.  
We call these nodes \textit{bridge nodes}. An example of a bridge node connecting two subgraphs is shown in in Figure \ref{bnodes.fig}. Each composite network $G$ of $n$ subgraphs will accordingly have $n$ bridge nodes, where each $l_i$ is connected to at least one other bridge node $l_j$.

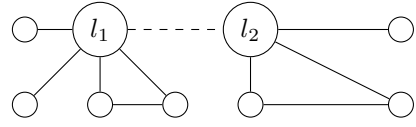
\begin{figure}
\centering
\begin{tikzpicture}

\node[shape=circle,draw=black] (1) at (0,0) {};
\node[shape=circle,draw=black] (2) at (1,1) {$l_1$};
\node[shape=circle,draw=black] (3) at (2,0) {};
\node[shape=circle,draw=black] (8) at (0,1) {};
\node[shape=circle,draw=black] (9) at (1,0) {};

\node[shape=circle,draw=black] (5) at (3,0) {};
\node[shape=circle,draw=black] (4) at (3,1) {$l_2$};
\node[shape=circle,draw=black] (6) at (5,0) {};
\node[shape=circle,draw=black] (7) at (5,1) {};

\foreach \from/\to in {1/2,2/3,4/5,4/6,4/7,5/6,2/8,9/3,9/2}
    \draw (\from) -- (\to);
\foreach \from/\to in {2/4}
    \draw (\from) [dashed] -- (\to);
\end{tikzpicture}
\caption{Two bridge nodes, $l_1$ and $l_2$, in a composite graph $G$ form the backbone graph.}
\label{bnodes.fig}
\end{figure}

The \emph{backbone graph} is the graph defined by the bridge nodes and the edges between them, $B=(V_B, E_B)$, $V_B=\{l_1, \ldots, l_n\}$, $E_B=\{(l_i, l_j) ~|~l_i, l_j, \in V_B\}$.  
The edge set $E_B$ corresponds to $\Econ$ in the composite graph.

 Our goal is to analyze the coherence of the composite graph in terms of its subgraphs, bridge nodes, and backbone graph topology. To do this, we exploit the connection between coherence and effective resistance in electrical networks.

\subsection{Resistance in Electrical Networks}

Consider a connected graph $G=(V,E)$ with $N$ nodes that represents an electrical network, where each edge is a unit resistor. The \emph{resistance distance} $r(u,v)$
between nodes $u$ and $v$ is the potential distance between $u$ and $v$ when a 1-A current source runs between them~\cite{KR93}. 
The \emph{effective resistance} of $G$ is the sum of the resistance distances between each pair of nodes~\cite{KR93}:
\begin{equation}
\Omega_G  = \frac{1}{2}\sum_{u,v\in V_G}r(u,v) = \sum_{u<v\in V_G}r(u,v).
\end{equation}

Coherence is related to effective resistance as follows \cite{PB14}:
\begin{align}
\label{resToCoh}
H_C(G)=\frac{\Omega_G}{2N}.
\end{align}


We use the following lemmas in our analysis.
\begin{lemma}[\cite{KR93}]
\label{cutpoint}
For any graph $G=(V,E)$, let $A=(V_A, E_A)$ and $B=(V_B,E_B)$ be two subgraphs such that $V_A \cup V_B = V$, $V_A \cap V_B = \{x\}$, $E_A \cup E_B = E$, and $E_A \cap E_B = \emptyset$. In other words $G$, is partitioned into two components $A$ and $B$ that share only a single vertex $\{x\}$.
The resistance distance between any two vertices $u$, $v$ with $u \in V_A$ and $v \in V_B$ is: 
\begin{equation}
r(u,v) = r(u,x) + r(x,v).
\end{equation}
\end{lemma}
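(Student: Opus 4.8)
The plan is to use the standard electrical characterization of resistance distance: for any two nodes $a,b$, a potential vector $\phi$ satisfying $L\phi = e_a - e_b$ (unit current injected at $a$ and withdrawn at $b$, where $e_a$ denotes the standard basis vector indexed by $a$) gives $r(a,b) = \phi(a) - \phi(b)$, and $\phi$ is unique up to an additive constant because $G$ is connected. I would establish the additivity by superposing the flow that realizes $r(u,x)$ with the flow that realizes $r(x,v)$ and showing that their sum is exactly the flow realizing $r(u,v)$.

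Concretely, first I would let $\phi^{(1)}$ solve $L\phi^{(1)} = e_u - e_x$ and $\phi^{(2)}$ solve $L\phi^{(2)} = e_x - e_v$, so that $r(u,x) = \phi^{(1)}(u) - \phi^{(1)}(x)$ and $r(x,v) = \phi^{(2)}(x) - \phi^{(2)}(v)$. The crucial step is the claim that $\phi^{(1)}$ is constant on $V_B$ and $\phi^{(2)}$ is constant on $V_A$. This is exactly where the cut-vertex hypothesis is used: since $V_A \cap V_B = \{x\}$, $E_A \cap E_B = \emptyset$, and $E_A \cup E_B = E$, the partition forbids any edge between $V_A \setminus \{x\}$ and $V_B \setminus \{x\}$, so every edge incident to a node of $V_B \setminus \{x\}$ has both endpoints in $V_B$ and the only way current can reach $V_B \setminus \{x\}$ is through $x$. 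Since no current is injected or withdrawn inside $V_B \setminus \{x\}$ in the problem defining $\phi^{(1)}$, summing the node-balance (Kirchhoff current) equations over $V_B \setminus \{x\}$ shows that the net current crossing into the $B$-side through $x$ is zero; hence $\phi^{(1)}$ restricted to the connected subgraph $B$ is harmonic with vanishing net boundary current and must be constant, equal to $\phi^{(1)}(x)$. The symmetric argument gives $\phi^{(2)}$ constant on $V_A$.

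Next I would add the two potentials. Setting $\phi = \phi^{(1)} + \phi^{(2)}$ yields $L\phi = (e_u - e_x) + (e_x - e_v) = e_u - e_v$, so the injected and withdrawn unit currents at $x$ cancel and $\phi$ is precisely a potential realizing $r(u,v)$; by uniqueness up to a constant, $r(u,v) = \phi(u) - \phi(v)$. Using the equipotential facts, $\phi(u) = \phi^{(1)}(u) + \phi^{(2)}(x)$ because $u \in V_A$ and $\phi^{(2)}$ is constant on $V_A$, while $\phi(v) = \phi^{(1)}(x) + \phi^{(2)}(v)$ because $v \in V_B$ and $\phi^{(1)}$ is constant on $V_B$. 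Subtracting telescopes the expression into
\begin{equation}
r(u,v) = \bigl(\phi^{(1)}(u) - \phi^{(1)}(x)\bigr) + \bigl(\phi^{(2)}(x) - \phi^{(2)}(v)\bigr) = r(u,x) + r(x,v),
\end{equation}
which is the claim.

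I expect the main obstacle to be the equipotential claim in the second step, namely rigorously arguing that the side of the graph containing neither source nor sink carries no current and sits at a single potential. This is the point at which the hypotheses $V_A \cap V_B = \{x\}$ and the disjoint edge partition are indispensable, and it implicitly requires $A$ and $B$ to be connected, so that a harmonic function with vanishing net boundary current is forced to be globally constant rather than merely locally constant; this connectedness is built into the \emph{two components} language of the statement. Everything else is routine bookkeeping with the superposition and with the uniqueness of the potential, both of which hold since $G$ is connected.
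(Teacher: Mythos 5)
Your argument is correct, but there is nothing in the paper to compare it against: Lemma~\ref{cutpoint} is imported verbatim from \cite{KR93} and the paper supplies no proof of its own. Judged on its own merits, your superposition proof is sound. The key step---that $\phi^{(1)}$ is constant on $V_B$---is handled properly: summing the Kirchhoff balance equations over $V_B\setminus\{x\}$ (all of whose incident edges lie in $E_B$ by the hypothesis $V_A\cap V_B=\{x\}$, $E_A\cap E_B=\emptyset$) forces the net current at $x$ within $B$ to vanish, so the restriction of $\phi^{(1)}$ to $V_B$ lies in the kernel of the Laplacian of $B$ and is therefore constant once $B$ is connected. Your worry about connectedness is also resolved cleanly: any component of $B$ not containing $x$ would have no edges to the rest of $G$, contradicting the connectedness of $G$ that the resistance distance already presupposes, so $B$ (and symmetrically $A$) is automatically connected and the maximum-principle/kernel argument applies globally. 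The remaining steps---linearity of $L$, uniqueness of the potential difference, and the telescoping $\phi(u)-\phi(v)=\bigl(\phi^{(1)}(u)-\phi^{(1)}(x)\bigr)+\bigl(\phi^{(2)}(x)-\phi^{(2)}(v)\bigr)$---are routine and correctly executed. This is essentially the classical cut-vertex (series-reduction) argument one finds in the electrical-network literature, so it is a faithful reconstruction of the result the paper cites.
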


\begin{lemma}[\cite{KR93}]
\label{graphDist}
For all vertex pairs $u,v \in G$, the graph distance $d(u,v)$ is such that $d(u,v) \geq r(u,v)$, with equality if and only if there is exactly one path between $u$ and $v$.
\end{lemma}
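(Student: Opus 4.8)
The plan is to establish the inequality by a series/parallel (monotonicity) argument and then pin down the equality case using the cut-vertex decomposition of Lemma~\ref{cutpoint}. The two facts I would lean on are that resistance distance is a metric, and so obeys the triangle inequality (as shown in \cite{KR93}), and that the effective resistance across a single unit edge is at most $1$, since that edge is itself a unit resistor placed in parallel with every other route through the graph.

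For the inequality, let $u = w_0, w_1, \ldots, w_k = v$ be a shortest path, so that $k = d(u,v)$ and each $(w_{i-1},w_i)$ is an edge of $G$. Because the direct edge $(w_{i-1},w_i)$ is a unit resistor in parallel with the remainder of the network, $r(w_{i-1},w_i) \le 1$. Applying the triangle inequality along the path,
\[
r(u,v) \;\le\; \sum_{i=1}^{k} r(w_{i-1},w_i) \;\le\; k \;=\; d(u,v).
\]
Equivalently, one can argue by Rayleigh's monotonicity law: the shortest path viewed in isolation is $k$ unit resistors in series with resistance exactly $k$, and $G$ is obtained from it by adding conductances in parallel, which cannot increase the effective resistance.

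For the equality characterization I would argue in two directions. If there is exactly one $u$-$v$ path, then every edge on it is a bridge and every internal vertex is a cut vertex separating the $u$-side from the $v$-side; applying Lemma~\ref{cutpoint} repeatedly at $w_1, \ldots, w_{k-1}$ yields the additive decomposition $r(u,v) = \sum_{i=1}^{k} r(w_{i-1},w_i)$, and each bridge edge, having no parallel route (the remaining components being current-free dead ends), contributes resistance exactly $1$, so $r(u,v) = k = d(u,v)$. Conversely, if more than one $u$-$v$ path exists, their union contains a cycle, so two vertices on a shortest path are joined by genuinely parallel routes; the parallel combination is strictly smaller than either branch alone, which forces at least one of the inequalities above to be strict, hence $r(u,v) < d(u,v)$.

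The main obstacle is making this last strict inequality rigorous rather than merely intuitive. The cleanest route is Thomson's principle: $r(u,v)$ equals the minimum energy dissipated over all unit $u$-$v$ flows, and this minimizer is unique. When edge-disjoint parallel routes are available, splitting the unit current between them dissipates strictly less energy than concentrating it on a single path, so the minimum falls strictly below $d(u,v)$. Alternatively, one can reduce the cycle explicitly by series--parallel operations and verify the strict drop at the parallel step; either way, the metric inequality and Lemma~\ref{cutpoint} dispatch everything except this strictness, which is where the real care is needed.
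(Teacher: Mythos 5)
The paper does not actually prove this lemma---it is imported verbatim from \cite{KR93} as a known fact---so there is no in-paper argument to compare yours against; what follows is an assessment of your proof on its own terms. Your argument is essentially correct and uses the standard toolkit. The inequality direction is complete: $r(a,b)\le 1$ for adjacent $a,b$ (the unit resistor in parallel with the rest of the network), plus the triangle inequality for the resistance metric along a shortest path, or equivalently Rayleigh monotonicity applied to the shortest path as a subgraph. The ``unique path $\Rightarrow$ equality'' direction is also sound: uniqueness of the $u$--$v$ path does force every edge on it to be a bridge and every internal vertex to be a cut vertex, so repeated application of Lemma~\ref{cutpoint} gives additivity, and each bridge carries the full unit current and hence contributes exactly $1$.

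The one place that still needs work is the step you yourself flag: strictness when a second path exists. Your phrase ``two vertices on a shortest path are joined by genuinely parallel routes'' is the claim that actually has to be proved, because two distinct $u$--$v$ paths $P$ and $P'$ need not be edge-disjoint, and a cycle in $P\cup P'$ need not pass through $u$ or $v$. The missing extraction is this: since neither $P$ nor $P'$ alone contains a cycle, any cycle in $P\cup P'$ uses edges of both; take a maximal arc $A$ of such a cycle consisting of edges not in $P$. Its endpoints $x,y$ lie on $P$, and $A$ is an $x$--$y$ path edge-disjoint from the segment $P[x,y]$. Now Thomson's principle finishes it concretely: route the unit flow along $P$ but divert current $\epsilon$ from $P[x,y]$ (length $\ell\ge 1$) onto $A$ (length $a\ge 1$); the energy is $k-2\ell\epsilon+(\ell+a)\epsilon^{2}<k$ for small $\epsilon>0$, so $r(u,v)<d(u,v)$. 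With that insertion your proof is complete; without it, the ``splitting current between parallel routes'' step is asserted rather than established.
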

In the case of tree graphs, we note that $d(u,v)=r(u,v)$.

\subsection{Coherence in General Composite Graphs}
We now analyze coherence for a general composite graph with an arbitrary backbone graph topology. To do so, we make use of the following definition.
\begin{definition}
Consider graph $G = (V, E)$. The \emph{resistance centrality} of a node $v \in V$ is
\begin{align}
C(v)=\sum_{u \in V \atop u \neq v}r(u,v).
\end{align}
\end{definition}
We observe that resistance centrality is inversely proportional to the information centrality measure defined in \cite{SZ89}.
We use  $C_i(v_i)$ to denote the resistance centrality of a node $v_i$ in subgraph $G_i$, computed only over the subgraph $G_i$.
Using this definition, we derive a formula for $H_C(G)$  in terms of the the coherence of the subgraphs, the choice of bridge nodes, and the topology of $B$.
 \begin{theorem}
 \label{coh.thm}
Consider a composite graph $G$ with backbone graph $B$ ($|V_B|=n$):
\begin{enumerate}
\item The coherence of $G$ is:
\begin{align}
&H_C(G)= \frac{1}{2N} \Big(\sum_{i=1}^n 2 n_i H_C(G_i) \nonumber \\
&+ \sum_{i=1}^n\sum_{j=i+1}^n|V_i||V_j|r(l_i,l_j) + \sum_{i=1}^n |V - V_i| C_i(l_i) \Big).  \label{cohEq}
\end{align}
\item To minimize the coherence of $G$, $B$ should be defined such that $l_i = \arg \min_{v \in V_i} C_i(v)$.
\end{enumerate}
\end{theorem}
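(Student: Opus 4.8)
The plan is to work entirely through the identity $H_C(G)=\Omega_G/(2N)$ from \eqref{resToCoh}, so that both parts reduce to understanding the total effective resistance $\Omega_G=\sum_{u<v\in V} r(u,v)$ as a function of the subgraphs and the bridge-node choices. I would split the pair-sum into \emph{intra-subgraph} pairs (both endpoints in some $V_i$) and \emph{inter-subgraph} pairs (endpoints in distinct $V_i,V_j$), and evaluate each group separately.

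For the intra-subgraph pairs, the central observation is that each bridge node $l_i$ is a cut vertex of $G$: since every edge of $\Econ$ incident to $G_i$ emanates from $l_i$, deleting $l_i$ separates $V_i\setminus\{l_i\}$ from the rest of the network. Hence, when a unit current is injected and extracted at two nodes $u,v\in V_i$, no net current crosses $l_i$ into the remainder of $G$, so the rest of the network acts as an inert pendant and $r_G(u,v)=r_{G_i}(u,v)$. Summing over all intra-subgraph pairs then gives $\sum_i \Omega_{G_i}=\sum_i 2n_i H_C(G_i)$, again by \eqref{resToCoh} applied to each $G_i$, which is the first term of \eqref{cohEq}.

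For the inter-subgraph pairs, I would apply Lemma~\ref{cutpoint} twice. With $u\in V_i$ and $v\in V_j$, first use $l_i$ as the cut vertex to write $r_G(u,v)=r_G(u,l_i)+r_G(l_i,v)$, then use $l_j$ as the cut vertex on the second term to obtain $r_G(u,v)=r_{G_i}(u,l_i)+r(l_i,l_j)+r_{G_j}(l_j,v)$, where the endpoint terms collapse to intra-subgraph resistances by the pendant argument above. Substituting this decomposition into $\sum_{i<j}\sum_{u\in V_i}\sum_{v\in V_j} r_G(u,v)$ and separating the three summands, the middle term immediately yields $\sum_{i<j}|V_i||V_j|r(l_i,l_j)$. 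The two endpoint terms produce $\sum_{i<j}|V_j|\,C_i(l_i)$ and $\sum_{i<j}|V_i|\,C_j(l_j)$ after recognizing $\sum_{u\in V_i} r_{G_i}(u,l_i)=C_i(l_i)$; the combinatorial step I expect to be the most delicate is re-indexing and merging these two double sums, which collapse into $\sum_i (N-n_i)C_i(l_i)=\sum_i |V-V_i|\,C_i(l_i)$. Combining everything and dividing by $2N$ produces \eqref{cohEq}.

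For part~2, I would argue that among the three terms of \eqref{cohEq}, the first depends only on the subgraphs, and the second, $r(l_i,l_j)$, is determined solely by the backbone topology: since each $G_i$ hangs off its bridge node as a pendant, $r_G(l_i,l_j)=r_B(l_i,l_j)$, which is unaffected by \emph{which} node of $V_i$ is designated as $l_i$. Thus the only term sensitive to the bridge-node choice is $\sum_i |V-V_i|\,C_i(l_i)$, and it is separable across subgraphs with strictly positive coefficients $|V-V_i|=N-n_i>0$. Minimizing $H_C(G)$ therefore amounts to minimizing each $C_i(l_i)$ independently, yielding $l_i=\argmin_{v\in V_i} C_i(v)$. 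The main point to verify carefully here is the claimed independence of $r(l_i,l_j)$ from the choice of bridge node, which rests squarely on the cut-vertex/pendant structure established in part~1.
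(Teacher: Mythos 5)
Your proposal is correct and follows essentially the same route as the paper: both reduce $H_C(G)$ to $\Omega_G/(2N)$, split the pair-sum into intra- and inter-subgraph pairs, apply the cut-vertex decomposition $r(u,v)=r(u,l_i)+r(l_i,l_j)+r(l_j,v)$, and collapse the endpoint sums into $\sum_i |V-V_i|\,C_i(l_i)$. Your additional justifications (the pendant argument for why intra-subgraph resistances are unchanged, and the observation that $r(l_i,l_j)$ is independent of which vertex of $V_i$ is chosen as $l_i$) are correct refinements of steps the paper leaves implicit, not a different method.
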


To prove this theorem, we  use the following proposition, which immediately follows from Lemma \ref{cutpoint}.
\begin{prop}
\label{effres.prop} The resistance distance between two nodes $u_i,~v_j \in V$ where $u_i \in V_i$, $v_j \in V_j$, $i \neq j$ is $r(u_i,v_j) = r(u_i,l_i) + r(l_i,l_j)+r(l_j,v_j)$. 
\end{prop}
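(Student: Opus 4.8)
The plan is to reduce the claim to two applications of Lemma \ref{cutpoint}, exploiting the fact that each bridge node is a cut vertex of the composite graph. First I would record the structural observation that makes this possible: in the refined composite model, the connecting edge set $\Econ$ consists only of edges between bridge nodes, so the sole route from any node inside $V_i$ to any node outside $V_i$ passes through $l_i$. Hence $l_i$ separates $V_i \setminus \{l_i\}$ from $V \setminus V_i$, and likewise $l_j$ separates $V_j \setminus \{l_j\}$ from $V \setminus V_j$.

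With this in hand, I would invoke Lemma \ref{cutpoint} twice. For the first application, partition $G$ into $A = G_i = (V_i, E_i)$ and $B$, the subgraph on $(V \setminus V_i) \cup \{l_i\}$ carrying all remaining edges $E \setminus E_i$; these satisfy $V_A \cup V_B = V$, $V_A \cap V_B = \{l_i\}$, and $E_A \cap E_B = \emptyset$, so $l_i$ plays the role of the shared vertex $x$. Since $u_i \in V_A$ and $v_j \in V_B$ (because $i \neq j$), the lemma gives $r(u_i, v_j) = r(u_i, l_i) + r(l_i, v_j)$. For the second application, partition $G$ into $A' = G_j = (V_j, E_j)$ and $B'$, the subgraph on $(V \setminus V_j) \cup \{l_j\}$; now $v_j \in V_{A'}$ while $l_i \in V_{B'}$, again because $i \neq j$. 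The lemma, together with the symmetry $r(l_i, v_j) = r(v_j, l_i)$, yields $r(l_i, v_j) = r(l_i, l_j) + r(l_j, v_j)$. Substituting this into the first identity produces the desired $r(u_i, v_j) = r(u_i, l_i) + r(l_i, l_j) + r(l_j, v_j)$.

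Because the statement is billed as immediate from Lemma \ref{cutpoint}, there is no genuine analytic obstacle; the only care required is bookkeeping in verifying the partition hypotheses. The single point worth checking explicitly is that no edge other than those incident to $l_i$ crosses out of $V_i$, which is precisely what the bridge-node definition of the composite model guarantees. Once that is confirmed, both partitions are legitimate and the two-step decomposition closes the argument.
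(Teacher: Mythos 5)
Your proof is correct and is exactly the argument the paper has in mind: the paper simply asserts that the proposition ``immediately follows from Lemma~\ref{cutpoint},'' and your two successive applications of that lemma at the cut vertices $l_i$ and $l_j$ (justified by the fact that $\Econ$ touches only bridge nodes) are the intended elaboration. No gaps.
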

We now prove the theorem.

\begin{proof}
We find the effective resistance of $G$ and then use this to find $H_C(G)$.
Let $G = (V,E)$  be a composite graph, i.e., $V = V_1 \cup \ldots \cup V_n$ and $E = E_1 \cup \ldots \cup E_n \cup E_{B}$.
$G$ is constructed such that every pair of subgraphs $G_i$, $G_j$, $i \neq j$ is connected only through their respective bridge nodes $l_i$ and $l_j$. 
By applying Lemma~\ref{cutpoint}, we can define $\Omega_G$ in terms of  $\Omega_i$, the effective resistance of $G_i$, for $i = 1 \ldots n$,
and the resistance distances of  each edge $e \in E_B$: 
 \begin{align}
 \Omega_G &=  \frac{1}{2}\sum_{i=1}^n \sum_{j=1}^n \left(\sum_{u \in V_i} \sum_{v \in V_j} r(u,v) \right)  \label{Rg1}\\
 &=\sum_{i=1}^n\Omega_i + \sum_{i=1}^n \sum_{j>i}^n \left(\sum_{u \in V_i} \sum_{v \in V_j} r(u,v) \right). \label{Rg2}
\end{align}
To obtain (\ref{Rg2}) from (\ref{Rg1}), note that when $i=j$, $\sum_{u \in V_i} \sum_{v \in V_j} r(u,v) = \Omega_i$.

For $i \neq j$, each term $\sum_{u\in V_i}\sum_{v\in V_j}r(u,v)$ in (\ref{Rg2}) can be rewritten as 
\begin{align}
\sum_{u\in V_i}\sum_{v\in V_j}r(u,l_i)+r(l_i,l_j)+r(l_j,v), \label{dblsum}
\end{align}
 as noted in Proposition \ref{effres.prop}.
In turn, the double sum  in (\ref{dblsum}) can be simplified to $|V_j|C_i(l_i) + |V_i||V_j|r(l_i,l_j) + |V_i|C_j(l_j)$.
The formula for $\Omega_G$ now becomes:\\
\begin{align*}
\Omega_G =& \sum_{i=1}^n\Omega_i \\
+ &\sum_{i=1}^n\sum_{j=i+1}^n\left( |V_j|C_i(l_i) + |V_i||V_j|r(l_i,l_j) + |V_i|C_j(l_j) \right) \\
=& \sum_{i=1}^n\Omega_i + \sum_{i=1}^n\sum_{j=i+1}^n\left( |V_j|C_i(l_i) + |V_i|C_j(l_j) \right)  \\
&+ \sum_{i=1}^n\sum_{j=i+1}^n|V_i||V_j|r(l_i,l_j).
\end{align*}
We can see that $\sum_{i=1}^n\sum_{j=i+1}^n\left( |V_j|C_i(l_i) + |V_i|C_j(l_j) \right)$
is $C_i(l_i)$ multiplied by the total number of vertices in $G-G_i$, therefore $\Omega_G$ is:
\begin{equation}
\label{resEq}
\Omega_G =  \sum_{i=1}^n\Omega_i +  \sum_{i=1}^n\sum_{j=i+1}^n|V_i||V_j|r(l_i,l_j) + \sum_{i=1}^n |V - V_i| C_i(l_i).
\end{equation}
Then, we divide use (\ref{resToCoh}) to replace $\Omega_i$ with $2n_iH_C(G_i)$ and divide (\ref{resEq}) by $2N$ to finally obtain (\ref{cohEq}). 

All terms in $H_C(G)$ are constant for any fixed set of graphs $G_i$ and backbone graph $B$, except for $C_i(l_i)$. Thus, we minimize $H_C(G)$ by selecting each bridge node $l_i$ to be the vertex with the minimum resistance centrality in $G_i$.
\end{proof}
\subsection{Analysis of Coherence in  Backbone Graph Structures}

We now explore specific backbone graph topologies. In addition to deriving formulae for  coherence, we are interested in identifying backbone graphs that
minimize the coherence of the composite graph.

First, consider a composite graph with a tree backbone graph with $|V_B|=n$. The coherence of such a composite graph can be derived from Theorem \ref{coh.thm} by using Lemma \ref{graphDist} to replace $r(l_i,l_j)$ with the graph distance $d(l_i,l_j)$ between bridge nodes to obtain:
\begin{align}
\label{tree}
H_C(G) =&    \frac{1}{2N} \Big( \sum_{i=1}^n 2n_i H_C(G_i) + \sum_{i=1}^n\sum_{j=i+1}^n d(l_i,l_j) |V_i||V_j| \nonumber \\
&+ \sum_{i=1}^n |V - V_i| C_i(l_i) \Big).
\end{align}

With this expression we can show that the star backbone graph is the optimal tree backbone graph topology.
\begin{corollary}
\label{treec}
The optimal composite graph with a tree backbone graph, $B=(V_B,E_B)$, $|V_B|=n$, has a star graph for $B$,
and the bridge node $l_c$ in the center of the star is such that $l_c \in V_c$, where $V_c \in \argmax \limits_{i} |V_i|$. \end{corollary}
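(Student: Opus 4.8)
The plan is to reduce the statement to a purely combinatorial optimization over tree topologies and then solve that optimization in closed form. Fix the subgraphs $G_i$ and, by Theorem~\ref{coh.thm}(2), fix each bridge node $l_i$ to minimize its resistance centrality $C_i(l_i)$; this choice is independent of how the backbone is wired. In the expression~(\ref{tree}) for $H_C(G)$, the terms $\sum_i 2n_iH_C(G_i)$ and $\sum_i|V-V_i|C_i(l_i)$ then depend only on the subgraphs and the bridge-node choices, not on the backbone tree. Hence minimizing $H_C(G)$ over tree backbones is equivalent to minimizing the single topology-dependent term $\mathrm{cost}(B)=\sum_{i<j} d(l_i,l_j)\,|V_i||V_j|$, where $d(l_i,l_j)$ is the hop distance in the tree $B$. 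Writing $w_i=|V_i|$ and $W=\sum_i w_i=N$, the task becomes: over all trees on the $n$ weighted nodes, minimize $\sum_{i<j} w_iw_j\,d(l_i,l_j)$.

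First I would evaluate this objective for a star. In a star centered at node $c$ every leaf is at distance $1$ from $c$ and every pair of leaves is at distance $2$, so a direct computation gives $\mathrm{cost}(\text{star}_c)=W^2-\sum_i w_i^2-w_c(W-w_c)$. To pick the best center I would compare two candidates $c$ and $i$ via $w_c(W-w_c)-w_i(W-w_i)=(w_c-w_i)(W-w_c-w_i)$, which is nonnegative whenever $w_c=\max_k w_k$, since both factors are then $\ge 0$ (as $W\ge w_c+w_i$). Thus among stars the cost is smallest when the center is a maximum-weight node, which establishes the claim that $l_c\in V_c$ with $V_c\in\argmax_i|V_i|$.

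The main work is the lower bound showing that no tree beats this star. For any tree $B$ and any node $i$, set $D_i=\sum_j w_j\,d(l_i,l_j)$; since every non-neighbor of $i$ lies at distance at least $2$, I get $D_i\ge (W-w_i)+\sum_{j\ \text{non-adjacent to}\ i} w_j$. Using $2\,\mathrm{cost}(B)=\sum_i w_iD_i$ and collecting the non-adjacent pairs, then rewriting the sum over non-edges as the complement of the edge set, yields $\mathrm{cost}(B)\ge (W^2-\sum_i w_i^2)-\sum_{\{l_i,l_j\}\in E_B} w_iw_j$. It remains to maximize $\sum_{e\in E_B} w_iw_j$ over trees: rooting $B$ at any node and bounding each child-parent product by $w_{\max}$ shows this sum is at most $w_{\max}(W-w_{\max})$, with equality exactly when every node's parent has maximum weight, i.e., when $B$ is a star centered at a maximum-weight node. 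Chaining the two inequalities gives $\mathrm{cost}(B)\ge W^2-\sum_i w_i^2-w_{\max}(W-w_{\max})=\mathrm{cost}(\text{star}_{c^*})$, so the star with a maximum-weight center is optimal and both parts of the corollary follow simultaneously.

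The step I expect to be the crux is this general lower bound. The naive per-node estimate $D_i\ge W-w_i$ (equality iff $i$ is adjacent to everything) cannot hold with equality at more than one node, so summing it alone is too weak to force the star. The fix is to retain the second-order term counting non-adjacent pairs, which converts the residual into a maximum-weight spanning-tree computation; verifying that this residual bound is tight precisely for the star is exactly what pins down both the star topology and the maximum-size center at once.
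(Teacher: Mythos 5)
Your proof is correct, and it is in fact more complete than the one in the paper. Both arguments begin with the same reduction: the subgraph and bridge-node terms in (\ref{tree}) are independent of the backbone topology, so the problem collapses to minimizing $\sum_{i<j} d(l_i,l_j)\,|V_i||V_j|$ over trees. The paper then evaluates this sum for a star, observes that placing the largest subgraph at the center is best among stars, and simply asserts that ``no other arrangement of subgraphs in a tree can reduce $H_C(G)$ more''---the comparison against non-star trees is never actually carried out. Your argument supplies exactly that missing piece: the per-node bound $D_i \ge (W-w_i) + \sum_{j \not\sim i} w_j$ yields $\mathrm{cost}(B) \ge \bigl(W^2 - \sum_i w_i^2\bigr) - \sum_{e \in E_B} w_iw_j$, and the rooting argument bounds $\sum_{e\in E_B} w_iw_j$ by $w_{\max}(W - w_{\max})$ over all trees, which the max-centered star attains with equality. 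The algebra checks out: the star cost is $\bigl(W^2-\sum_i w_i^2\bigr)-w_c(W-w_c)$, the center comparison factors as $(w_c-w_i)(W-w_c-w_i)\ge 0$, and the chained bound matches the star cost exactly. One small caveat: equality in the spanning-tree bound forces every node's parent to have maximum weight, which characterizes the star only when the maximum weight is attained by a unique node; since the corollary claims optimality rather than uniqueness, this does not affect correctness. What your route buys is a genuine proof of star-optimality over all trees (and your intermediate inequality in fact holds for arbitrary connected backbones, with the tree structure entering only through the edge count), at the cost of somewhat more bookkeeping than the paper's two-line computation.
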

\begin{proof}
In a star graph, the resistance distance between the bridge node of the central graph $G_c$ and all other subgraphs $G_i$, $c \neq i$, is 1, and the resistance distance between the bridge nodes of all other subgraphs $G_i$ and $G_j$, $i,j \neq c$, is 2. 

When calculating $H_C(G)$, $d(l_i,l_j) |V_i||V_j|$ is computed for all combinations of $i$ and $j \leq n$. Therefore, to minimize $\sum_{i=1}^n\sum_{j=i+1}^n d(l_i,l_j) |V_i||V_j|$, and thus also minimize (\ref{tree}), we choose a subgraph $V_c \in \argmax \limits_{i} |V_i|$ to be the center of the star graph.
Then,  $\sum_{i=1}^n\sum_{j=i+1}^n d(l_i,l_j) |V_i||V_j|$ becomes  $\sum_{i \neq c} |V_c| |V_i| + \sum_{i \neq c} \sum_{j=i+1, j \neq c} 2\cdot |V_i||V_j|$.
No other arrangement of subgraphs in a tree can reduce $H_C(G)$ more.
\end{proof}

Now consider a composite graph with a line backbone graph of size $n$. The coherence of such a composite graph is again derived from (\ref{cohEq}) and (\ref{tree}), but here $d(l_i,l_j)=j-i$ for all $i>j$. Therefore:
\begin{align}
&H_C(G) =  \frac{1}{2N} \Big( \sum_{i=1}^n  2n_i H_C(G_i) \nonumber \\
&+ \sum_{i=1}^n\sum_{j=i+1}^n (j-i)|V_i||V_j| \sum_{i=1}^n |V - V_i|C_i(l_i) \Big). \label{cohLine}
\end{align}

\begin{corollary}
The optimal composite graph $G$ with a line backbone graph $B=(V_B, E_B)$  where the nodes of $B$, $V_B=\{l_{s_1}, l_{s_2}, \ldots,l_{s_n}\}$, are ordered from left to right in the path graph and the subgraphs $G_1,G_2, \ldots, G_n$ are ordered by decreasing vertex set size. Then if $c={\lfloor \frac{n}{2}\rfloor}$, we can assign the vertices of $V_B$ as follows: $l_{s_c}=l_1, ~ l_{s_{c+1}}=l_2, ~ l_{s_{c-1}}=l_3, l_{s_{c+1}}=l_4, ~ l_{s_{c-2}}=l_5, ~\text{etc.}$, where $l_i$ is the bridge node of $G_i$. 
\end{corollary}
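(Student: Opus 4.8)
The plan is to strip $H_C(G)$ down to the single term that actually depends on how the subgraphs are placed along the line. In the line formula (\ref{cohLine}), the term $\sum_{i=1}^n 2n_iH_C(G_i)$ is a sum of intrinsic quantities of the fixed subgraphs, and $\sum_{i=1}^n |V-V_i|C_i(l_i) = \sum_{i=1}^n (N-|V_i|)C_i(l_i)$ depends only on each subgraph's size and its chosen bridge node; neither changes when we permute which subgraph occupies which position of the path. Hence minimizing $H_C(G)$ over arrangements is equivalent to minimizing $\Phi = \sum_{k<l}(l-k)\,a_k a_l$, where $a_k$ is the size $|V|$ of the subgraph placed at position $k$ of the line and $(a_1,\dots,a_n)$ ranges over all permutations of $(|V_1|,\dots,|V_n|)$, which I order so that $|V_1|\ge\cdots\ge|V_n|$.

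Next I would rewrite $\Phi$ through prefix sums. Writing the distance $l-k$ as the number of unit gaps between positions $k$ and $l$, each gap $m$ (between positions $m$ and $m+1$) is crossed by exactly the pairs with $k\le m<l$, so $\Phi=\sum_{m=1}^{n-1}S_m(W-S_m)$, where $S_m=a_1+\cdots+a_m$ and $W=N$. The key computation is the effect of transposing the weights at two \emph{adjacent} positions $k$ and $k+1$: only the single prefix sum $S_k$ is altered, and a short expansion of $g(s)=s(W-s)$ gives the change $\Delta=(a_{k+1}-a_k)(Q_k-P_k)$, where $P_k=\sum_{j<k}a_j=S_{k-1}$ and $Q_k=\sum_{j>k+1}a_j=W-S_{k+1}$ are the masses strictly left of $k$ and strictly right of $k+1$.

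From here the structure of the optimum follows. At a minimizer no adjacent transposition can decrease $\Phi$, so $(a_{k+1}-a_k)(Q_k-P_k)\ge 0$ for every $k$. Because all subgraph sizes are positive, $Q_k-P_k=W-S_{k-1}-S_{k+1}$ is strictly decreasing in $k$ and therefore changes sign exactly once; this forces $a_k$ to increase up to a central index and then decrease, i.e. the optimal size profile is the \emph{organ-pipe} (unimodal, center-peaked) shape, with the peak located where the cumulative mass crosses $W/2$. Combined with the fact that the sizes are already sorted, the largest subgraph must sit at the central position and successively smaller subgraphs are placed alternately to its right and left, which is precisely the claimed zigzag assignment $l_{s_c}=l_1,\ l_{s_{c+1}}=l_2,\ l_{s_{c-1}}=l_3,\dots$.

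I expect two points to be the main obstacles. First, promoting the adjacent-swap necessary condition to a genuine proof of \emph{global} optimality: unimodality is forced, but one must still show that, among all center-peaked arrangements, the specific alternating placement of the sorted sizes minimizes $\Phi$, which needs a second rearrangement comparison between unimodal arrangements (equivalently, a nonadjacent-swap argument). Second, the exact index bookkeeping as $n$ varies: the stated center $c=\lfloor n/2\rfloor$ seems to require $c=\lceil n/2\rceil$ for odd $n$ (for $n=3$ it otherwise references the nonexistent position $0$), and the listed pattern's repeated $l_{s_{c+1}}=l_4$ should read $l_{s_{c+2}}=l_4$; I would treat the even and odd cases separately, note the reflection symmetry (the mirror-image arrangement is equally optimal, matching the $\Delta=0$ tie at the central gap), and write the final assignment so the indices are consistent with the center position in both parities.
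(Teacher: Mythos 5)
Your reduction of the problem to minimizing $\Phi=\sum_{k<l}(l-k)a_k a_l$ over permutations is exactly the paper's first (and essentially only) step; from there the two arguments diverge. The paper's own proof stops at that reduction and simply asserts (``Clearly, this is done by\ldots'') that placing the largest subgraph at the center and progressively smaller ones outward minimizes the sum; no identity or exchange argument is given. Your prefix-sum rewriting $\Phi=\sum_{m=1}^{n-1}S_m(W-S_m)$ and the adjacent-transposition computation $\Delta=(a_{k+1}-a_k)(Q_k-P_k)$, combined with the observation that $Q_k-P_k$ decreases strictly in $k$ and hence changes sign at most once, constitute a genuine proof that every optimal arrangement is unimodal (organ-pipe) --- strictly more than the paper establishes. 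The gap you flag is real: unimodality alone does not select the particular alternating assignment from among all center-peaked arrangements of the sorted sizes, so a second, nonadjacent exchange comparison is still required; but this gap is present, unacknowledged, in the paper's proof as well, so you have not missed anything the authors supply. Your editorial observations are also correct: for odd $n$ the stated center $c=\lfloor n/2\rfloor$ makes the pattern reference a nonexistent position (e.g.\ $s_0$ when $n=3$, where one needs $c=\lceil n/2\rceil$), and the repeated $l_{s_{c+1}}=l_4$ is evidently a typo for $l_{s_{c+2}}=l_4$. In short: same reduction, but your route buys a rigorous structural characterization that the paper lacks, at the price of one remaining exchange lemma that neither you nor the paper actually proves.
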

\begin{proof}
As in Corollary \ref{treec}, to optimize (\ref{cohLine}) we need only to optimize $\sum_{i=1}^n\sum_{j=i+1}^n (j-i)|V_i||V_j|$. Therefore we need to find the optimal arrangement of subgraphs in order to minimize this sum, which can be done by finding the ordering such that as $|V_i||V_j|$ increases, $j-i$ decreases. Clearly, this is done by placing the subgraphs along the line backbone in a way that minimizes the distance of the largest subgraphs to all other subgraphs and maximizes the distance of the smallest subgraphs to all other subgraphs. This requirement is fulfilled by placing the largest subgraph at the center of the line, placing the smallest subgraphs on the endpoints of $B$, and arranging the subgraphs in between closer to the center or to the ends according to their size.

\end{proof}

The coherence of a composite graph with a ring backbone graph $|V_B|=n$ is derived from (\ref{cohEq}) by noting that in a ring graph with $n$ nodes, $r(l_i,l_j) = \frac{(j-i)(n-(j-i))}{n}$. We then get the following formula:
\begin{align*}
H_C(G) &= \frac{1}{2N} \Big( \sum_{i=1}^n 2 n_i H_C(G_i)  + \sum_{i=1}^n |V - V_i| C_i(l_i)\\
&+   \sum_{i=1}^n\sum_{j=i+1}^n \frac{(j-i)(n-(j-i))}{n} |V_i||V_j| \Big).
\end{align*}

Finally we consider the upper and lower bounds for $H_C(G)$ over all subgraph topologies and all backbone graph topologies.
The proof of these results is given in the appendix.


\begin{corollary} \label{bound1.cor}
The lower bound for the coherence of any composite graph with $|V_B|=n$ and $|V_i| = |V_j|=m$ for all $i,j \leq n$ is: 
\begin{equation}
H_C(G) \geq \textstyle \frac{1}{2N} \Big( n(m-1) + 2m^2(n-1) + 2n(n-1)(m-1)\Big). 
\end{equation}
\end{corollary}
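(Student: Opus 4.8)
The plan is to start from the closed-form expression for $\Omega_G$ in (\ref{resEq}) (equivalently $H_C(G)$ in (\ref{cohEq})), specialize it to the equal-size case $|V_i|=m$ for all $i$ (so $N=nm$, $n_i=m$, and $|V-V_i|=m(n-1)$), and then lower-bound the three groups of terms independently. Since each group is nonnegative and depends on essentially disjoint design choices---the internal topology of each $G_i$, the bridge-node choice $l_i$, and the backbone topology $B$---a termwise lower bound yields a valid global lower bound, and I expect it to be attained by the most connected configuration: every subgraph equal to $K_m$ and the backbone equal to $K_n$.

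The single tool behind all three bounds is Rayleigh monotonicity: any connected graph on $k$ labeled nodes is a subgraph of $K_k$, so deleting edges never decreases an effective resistance, giving $r(u,v)\ge \tfrac{2}{k}$ (the $K_k$ value) for every pair. Applying this inside each $G_i$ gives $\Omega_i=\sum_{u<v\in V_i}r(u,v)\ge \binom{m}{2}\tfrac{2}{m}=m-1$, hence $2n_iH_C(G_i)=\Omega_i\ge m-1$ and $\sum_i 2n_iH_C(G_i)\ge n(m-1)$. The same per-pair bound applied at a single node gives the resistance-centrality estimate $C_i(l_i)=\sum_{u\ne l_i}r(u,l_i)\ge (m-1)\tfrac{2}{m}=\tfrac{2(m-1)}{m}$, so that $\sum_i |V-V_i|\,C_i(l_i)\ge \sum_i m(n-1)\tfrac{2(m-1)}{m}=2n(n-1)(m-1)$; note this automatically subsumes the optimal bridge-node choice of Theorem~\ref{coh.thm}, since we are bounding below the minimum over all nodes.

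For the backbone term I would first observe, via the cut-point Lemma~\ref{cutpoint}, that each subgraph hangs off the backbone at its single bridge node and therefore carries no current when a unit source is placed between two bridge nodes; consequently $r(l_i,l_j)$ computed in $G$ equals the effective resistance between $l_i$ and $l_j$ in the backbone $B$ alone, and $\sum_{i<j}|V_i||V_j|\,r(l_i,l_j)=m^2\,\Omega_B$. Rayleigh monotonicity on $B$ (a connected graph on $n$ nodes) then yields $\Omega_B\ge n-1$, producing the backbone contribution $m^2(n-1)$. Summing the three bounds and dividing by $2N$ assembles the claimed inequality.

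The main obstacle is the justification of complete-graph minimality rather than the arithmetic: I must argue that $K_m$ simultaneously minimizes both the Kirchhoff index $\Omega_i$ and the per-node resistance centrality $C_i(\cdot)$, and that $K_n$ minimizes $\Omega_B$, all of which I reduce to the single per-pair inequality $r\ge 2/k$. A secondary point requiring care is the reduction of the composite-graph resistance $r(l_i,l_j)$ to a purely backbone quantity, and tracking the constant on the backbone term explicitly against the $K_n$ value $\Omega_{K_n}=n-1$ so that it stays consistent with the extremal configuration that attains the other two bounds.
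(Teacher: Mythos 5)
Your decomposition is sound and in fact more rigorous than the paper's own argument: the paper simply computes $H_C$ for the configuration with $B=K_n$ and every $G_i=K_m$ and asserts, without formal justification, that this configuration is minimal, whereas your termwise application of Rayleigh monotonicity ($r(u,v)\ge 2/k$ in any connected graph on $k$ labeled nodes) bounds each of the three groups in (\ref{resEq}) independently, and so does not even require a single extremal configuration to minimize all terms simultaneously. Your reduction of $r(l_i,l_j)$ in $G$ to a purely backbone quantity via the cut-point lemma is also correct and is a step the paper glosses over.

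However, there is a genuine problem at the final assembly, and it sits exactly at the point you flagged as needing care. Your three bounds sum to $n(m-1)+m^2(n-1)+2n(n-1)(m-1)$, whereas the stated corollary has $2m^2(n-1)$ in the middle term; these do not agree, so your closing claim that the bounds ``assemble the claimed inequality'' is false as written. The discrepancy originates in the paper's own arithmetic: in the appendix, $\frac{2n}{n^2}\,m^2\,\frac{n(n-1)}{2}=m^2(n-1)$, not $2m^2(n-1)$, so the printed corollary is too strong --- indeed it is violated by the extremal configuration itself (for $n=m=2$ the composite graph is the path on four vertices, with $H_C=10/8$, below the claimed bound of $14/8$). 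Your constant is the correct one; to close the argument you should either prove the corrected statement with $m^2(n-1)$, or explicitly note that the stated bound cannot hold, rather than asserting a match that the algebra does not support.
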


Now, we consider the upper bound of $H_C(G)$ for a graph $G$. We first note that with all else held equal, the backbone graph which maximizes $H_C(G)$ is the line graph, since by Lemma \ref{graphDist} only tree backbones have $d(l_i,l_j) \geq r(l_i,l_j)$, and the line graph has the largest diameter of all tree graphs. Since all $|V_i|=m$, the ordering of the subgraphs along the line has no effect on the coherence. We previously derived the formula for the coherence of a line composite graph (\ref{cohLine}).
\begin{corollary} \label{bound2.cor}
The upper bound for the coherence of any composite graph with $|V_B|=n$ and $|V_i| = |V_j|=m$ for all $i,j \leq n$ is: 
\begin{align*}
H_C(G) \leq&  \textstyle \frac{1}{12N}\left(nm(m^2-1)\right) + \frac{1}{12N}\left(nm^2(n^2-1)\right) \\
&~~~~+ \textstyle \frac{1}{4N}\left( nm^2 (m-1)(n-1)\right).
\end{align*}
\end{corollary}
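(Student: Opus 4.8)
The plan is to start from the line-backbone coherence formula (\ref{cohLine}), which the preceding discussion has already identified as the backbone that maximizes $H_C(G)$ when all subgraphs have equal size, and then to bound each of its three summands separately. Specializing to $|V_i|=m$ for all $i$ (so that $N=nm$ and $|V-V_i|=m(n-1)$), the three terms inside (\ref{cohLine}) become $\sum_{i=1}^n 2m\,H_C(G_i)$, $m^2\sum_{i=1}^n\sum_{j=i+1}^n(j-i)$, and $m(n-1)\sum_{i=1}^n C_i(l_i)$. Since these depend on disjoint pieces of the data — respectively the internal topology of each $G_i$, the fixed backbone distances, and the choice of bridge node inside each $G_i$ — an upper bound on the sum follows from an upper bound on each term individually; I do not need a single configuration to attain all three maxima at once, because summing the individual maxima can only overestimate $H_C(G)$ and hence preserves the inequality direction.

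For the middle term the computation is exact: using $\sum_{i<j}(j-i)=\sum_{d=1}^{n-1}d(n-d)=\frac{n(n^2-1)}{6}$, the term equals $\frac{nm^2(n^2-1)}{6}$, which after the overall factor $\frac{1}{2N}$ yields the second summand $\frac{1}{12N}nm^2(n^2-1)$ of the claimed bound.

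For the first and third terms the key move is to replace effective resistances by graph distances via Lemma \ref{graphDist} ($r\le d$) and then invoke the extremal property of the path $P_m$. Writing $\Omega_i=\sum_{u<v}r(u,v)\le\sum_{u<v}d(u,v)$, the total distance (Wiener index) of a connected graph on $m$ nodes is maximized by $P_m$, with value $\frac{m(m^2-1)}{6}$; since $2m\,H_C(G_i)=\Omega_i$ by (\ref{resToCoh}), the first term is at most $\frac{nm(m^2-1)}{6}$. Likewise $C_i(l_i)=\sum_{u\neq l_i}r(u,l_i)\le\sum_{u\neq l_i}d(u,l_i)$, and the distances from a single vertex to the other $m-1$ vertices sum to at most $1+2+\cdots+(m-1)=\frac{m(m-1)}{2}$, attained exactly when the graph is a path and $l_i$ is an endpoint; hence the third term is at most $m(n-1)\cdot n\cdot\frac{m(m-1)}{2}=\frac{nm^2(m-1)(n-1)}{2}$. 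Multiplying these two bounds by $\frac{1}{2N}$ produces the first and third summands of the corollary, and adding the three pieces completes the proof.

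The main obstacle is justifying the two extremal facts — that among connected graphs on $m$ vertices the path simultaneously maximizes the Wiener index and the single-vertex distance sum. Both are classical, and each reduces to the same observation: a path is the unique connected graph in which every breadth-first level from the relevant vertex contains exactly one node, forcing the multiset of distances to be the largest possible set of positive integers, namely $\{1,2,\ldots,m-1\}$. These can either be cited or dispatched in a sentence. A secondary point worth stating cleanly is the legitimacy of bounding termwise, noting (as a remark) that the path subgraph with endpoint bridge nodes in fact realizes all three maxima together, so the bound is attained and not merely an overestimate.
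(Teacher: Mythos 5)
Your proof is correct and follows essentially the same route as the paper's: both specialize the line-backbone formula (\ref{cohLine}) to equal-size subgraphs, identify path subgraphs with endpoint bridge nodes as the extremal configuration, and arrive at the identical three-term bound. The only difference is one of rigor --- the paper simply computes the coherence of that single configuration and asserts it is maximal, whereas you bound the three summands termwise via Lemma~\ref{graphDist} and the classical extremality of the path for the Wiener index and single-vertex distance sums, thereby supplying justifications the paper leaves implicit.
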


\section{Composite Graphs with Stubborn Agent Dynamics}

We consider the problem of how to select $\Econ$ so as to minimize the coherence $H_S(G)$.
In particular, we assume that only a fixed number of edges $k$
can be chosen.  The optimal edge set can be found by an exhaustive search over all subsets of $k$ edges, however, this approach is computationally intractable for large subgraphs and values of $k$.

Instead, we define a greedy polynomial-time algorithm for selecting the edge set  $E_{con}$.
The pseudocode is given in Algorithm \ref{greedy.alg}.
In each iteration, an edge $e$ is chosen whose addition to $\Econ$ minimizes the coherence of $G$.  This is repeated until $|\Econ|$ has reached the desired size $k$. The input to Algorithm~\ref{greedy.alg}, is the graph $G=(V,\overline{E})$, where $V=V_1 \cup \ldots \cup V_n$ and $\overline{E}=E_1 \cup \ldots \cup E_n$, the matrix $Q$, and  the desired size of $\Econ$, $k$.  

%


\begin{algorithm}[t]
\caption{Greedy algorithm for choosing $\Econ$.} \footnotesize
\begin{algorithmic}
\REQUIRE $G = (V, \overline{E}),Q,k$
\STATE $\Ec \leftarrow \{(u,v)~|~ u\in V_i,~v \in V_j,~ j \neq i \}$
\STATE $|\Econ| \leftarrow \emptyset$
\WHILE{ $ |\Econ | < k $ } 
\STATE $e \leftarrow \{ \argmin \limits_{e \in \E_c} \tr{(Q_{\Econ}+L_e)^{-1}}\}$
\STATE $\Ec \leftarrow \Ec \setminus \{e\}$
\STATE $\Econ \leftarrow \Econ \cup \{e\}$
\ENDWHILE
\end{algorithmic}
\label{greedy.alg}
\end{algorithm}

\section{Numerical Examples}
We illustrate the performance of our greedy algorithm for adding edges to initially disjoint networks. In all examples, we consider stubborn agent dynamics.
The following two numerical examples were produced in Matlab. Both examples were run with two different types of $D$ matrices: $D_I=I$  and $D_R$, where $D_R$ is diagonalized from $d=[d_1, \ldots, d_n]^{\tp}$ where for each $i$, with probability 0.2, we set $d_i = 0$, and with probability $0.8$, we set $d_i$ to a value chosen uniformly at random from $(0,1]$.

\subsection{Adding Edges Between Versus Within Subgraphs}
\begin{figure}
  \includegraphics[scale=.35]{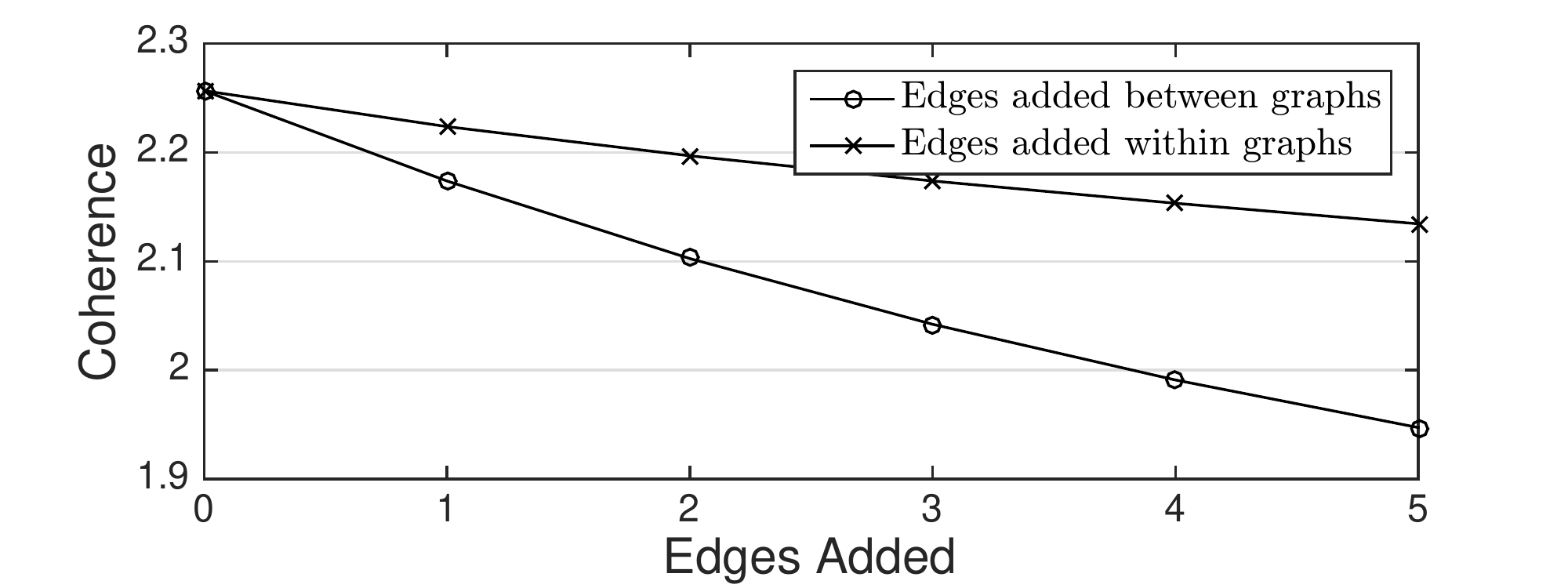}
  \centering
  \caption{Comparison of coherence when adding successive edges between or within the subgraphs of a network.}
  \label{singleEdgesD=I.fig}
\end{figure}

\begin{figure}
  \includegraphics[scale=.35]{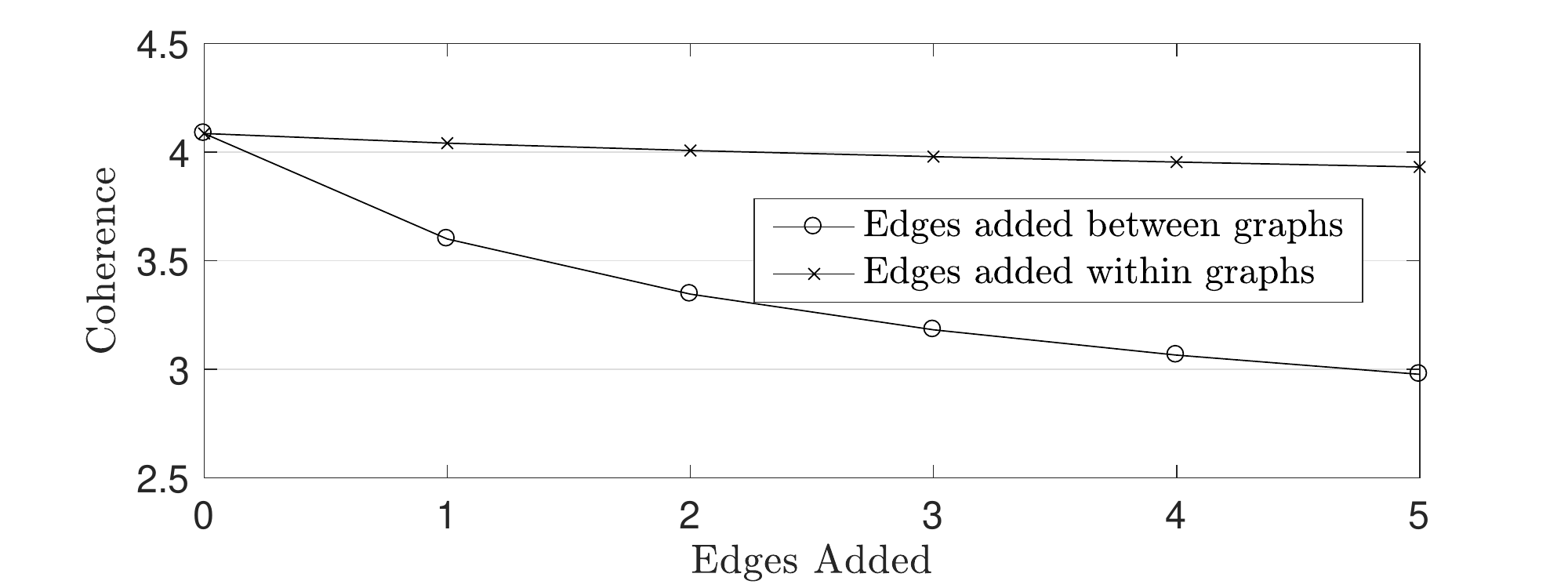}
  \centering
  \caption{Comparison of coherence changes when adding successive edges between or within the subgraphs of a network.}
  \label{singleEdgesD!=I.fig}
\end{figure}

We first demonstrate through numerical examples that the coherence of a network with stubborn agent dynamics will always see more improvement by adding a new edge between the subgraphs rather than within a given subgraph.
 The networks are generated as follows: two disjoint Erd\H{o}s-R\'{e}nyi graphs $G_1$ and $G_2$ with sizes between 8 and 15 are randomly generated to form a network $G=(V_1 \cup V_2, E_1 \cup E_2)=(V,E)$. In Figure \ref{singleEdgesD=I.fig} we use $D_I$ and in Figure \ref{singleEdgesD!=I.fig}, we use $D_R$ for our calculations. For each numerical example, we begin with two candidate edge sets:  the set of edges between the two subgraphs and the set of edges within the two subgraphs. 

We run Algorithm \ref{greedy.alg} to choose the set of edges between the graphs, and then we repeat the algorithm to choose the set of edges within the subgraphs.  We run a series of 20 trials; 
we calculate the resulting coherence of each edge added in every trial and take the average results across all runs.
These average coherence values from adding edges between the subgraphs and within the subgraphs are plotted in the accompanying figures. As we can see in Figure \ref{singleEdgesD=I.fig} and Figure \ref{singleEdgesD!=I.fig}, the coherence of the network with edges added between the subgraphs is always better than the coherence of the network with edges added only within $G_1$ or $G_2$. Note also that initial improvement from adding a single edge is always greater when adding an edge between the subgraphs, rather than adding an edge within the subgraph. 

We have also observed that when Algorithm \ref{greedy.alg} is performed on sets of $n>2$ graphs, so long as more than $n-1$ edges are added, the improvement in coherence from adding a $k^{th}$ edge  up to the $n-1^{th}$ will always be greater when adding an edge between subgraphs than within subgraphs. That is, the best choice will always be to connect another pair of subgraphs, rather than add an edge within an already connected subgraph of $G$.

\subsection{Greedy Versus Optimal $E_{con}$}
\begin{figure}
\centering
  \includegraphics[scale=.35]{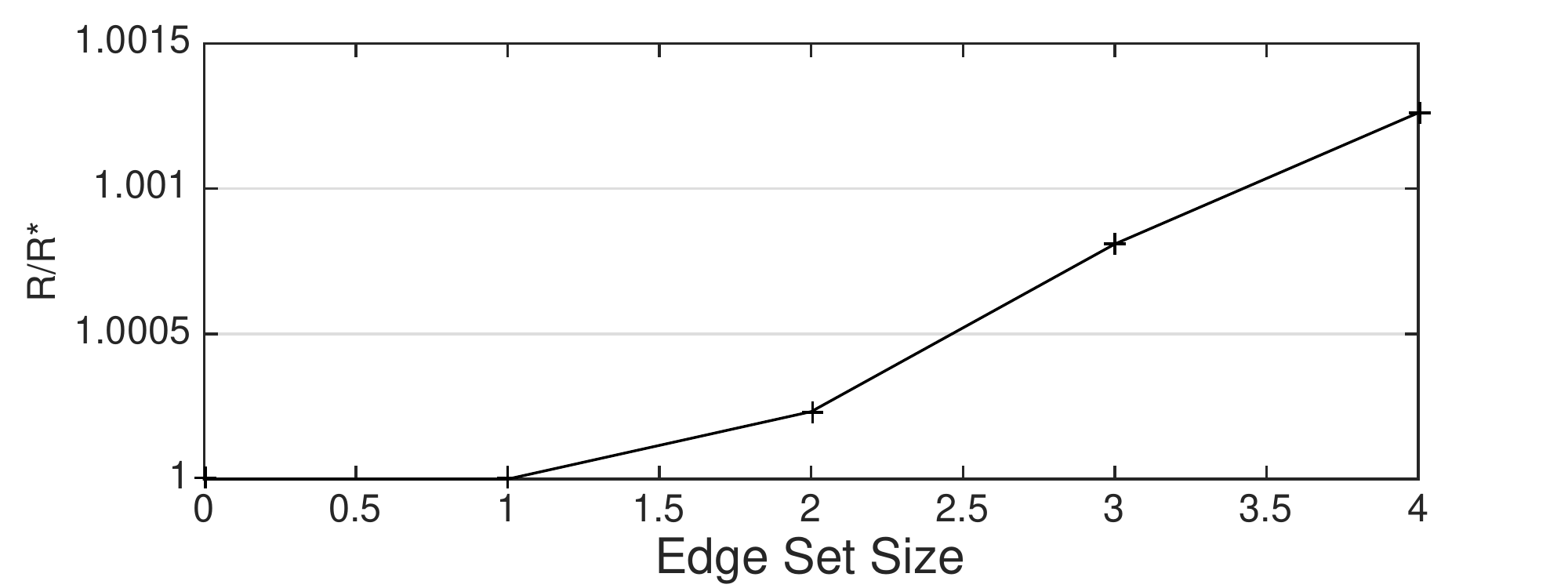}
  \caption{The ratio of the coherence of greedy vs. optimal edge sets for $Q= L + D_I$.}
  \label{greedyRatioD=I.fig}
\end{figure}

\begin{figure}
\centering
  \includegraphics[scale=.35]{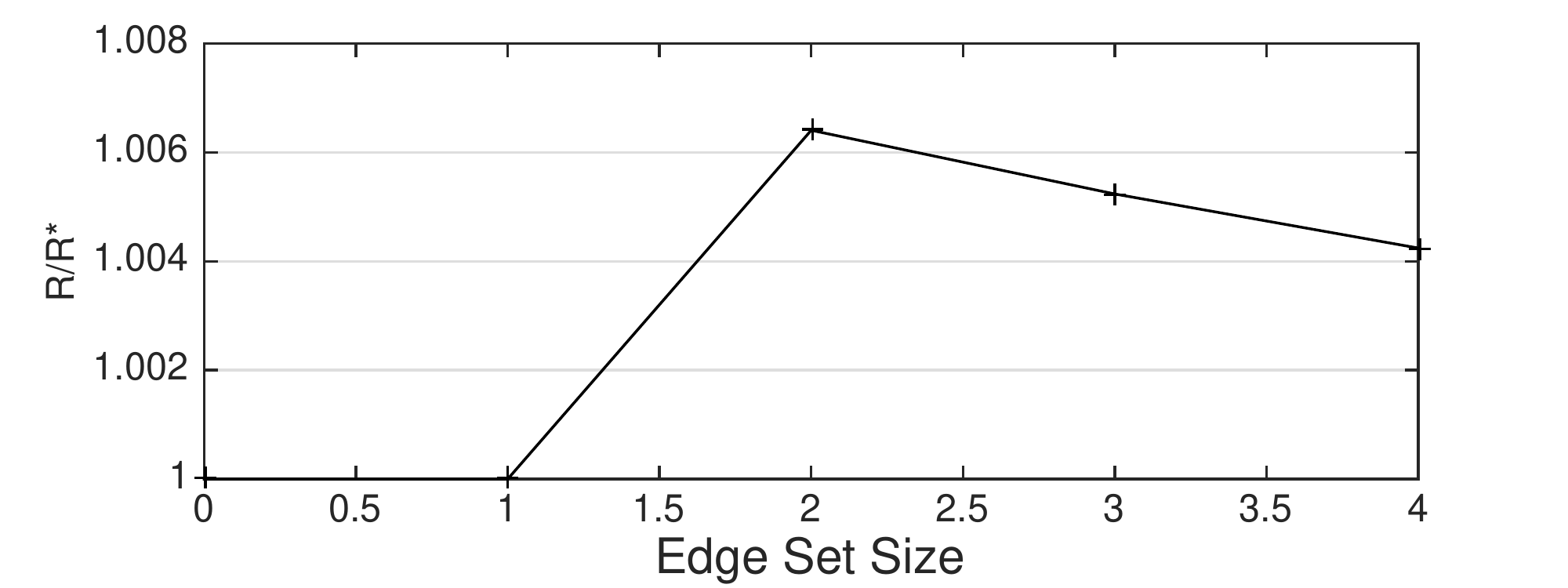}
  \caption{The ratio of the coherence of greedy vs. optimal edge sets for $Q = L + D_R$.}
  \label{greedyRatioD!=I.fig}
\end{figure}
We now demonstrate that the coherence of $G$ composed of two subgraphs with $E_{con}$ of size $k$ chosen by Algorithm \ref{greedy.alg} is very close to the coherence of $G$ with an optimal edge set $E^*$ selected. 
We run simulations with two randomly generated Erd\H{o}s-R\'{e}nyi graphs with size between 4 and 8 (the small graph sizes are due to the combinatorial nature of optimal set selection). As in the previous numerical example, in Figure \ref{greedyRatioD=I.fig} we use $D_I$ and in Figure \ref{greedyRatioD!=I.fig}, we use $D_R$.
For this example, we make no distinction between edges within or between the original subgraphs.

We first use Algorithm \ref{greedy.alg} to choose an edge set of size $k$ to add to the network. Then the optimal set is selected by forming all possible edge sets of size $k$ and calculating the decrease in coherence of adding each of them in turn. The edge set with the greatest decrease is then chosen. The results of both methods are then summed and averaged over a series of 15 trials.

We can see that the performance of the edge sets chosen by Algorithm~\ref{greedy.alg}  are quite close to the performance of the optimal sets, justifying the use of the greedy algorithm.

\section{Example} \label{ex.sec}
%
%

Consider the graphs $G_1$ and $G_2$ shown in Figure \ref{G.fig}. Let their respective bridge nodes be $l_1=2$ and $l_2=4$ and let the edge set of the backbone graph $B$ be $E_B=\{ (2,4)\}$.
We then form the graph $G=\{ \{E_1 \cup E_2\}, \{V_1 \cup V_2 \cup (2,4)\} \}$.

\begin{figure}
\centering
\begin{tikzpicture}
\node[shape=circle,draw=black] (1) at (0,0) {1};
\node[shape=circle,draw=black] (2) at (1,1) {2};
\node[shape=circle,draw=black] (3) at (2,0) {3};

\node[shape=circle,draw=black] (5) at (4,0) {5};
\node[shape=circle,draw=black] (4) at (4,1) {4};
\node[shape=circle,draw=black] (6) at (6,0) {6};
\node[shape=circle,draw=black] (7) at (6,1) {7};

\foreach \from/\to in {1/2,2/3,4/5,4/6,4/7,5/6}
    \draw (\from) -- (\to);
\foreach \from/\to in {2/4}
    \draw (\from) [dashed] -- (\to);
\end{tikzpicture}
\caption{Composite graph $G$.}
\label{G.fig}
\end{figure}
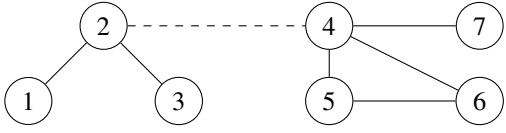

For the given graphs, $R_1 = 4$, $R_2 = 6 \frac{1}{3}$, $C_1(l_1) = 2$, $C_2(l_2) = \frac{7}{3}$. 
We then use (\ref{cohEq}) to calculate the coherence of $G$ to be:  $H_C(G)=\frac{8}{3}$.

We also use the above pair of graphs to illustrate the results of Algorithm \ref{greedy.alg}. In Table \ref{HS.table}, we list the edge sets $E_{con}$ of size $k=1, \ldots, 3$ generated by the algorithm and their corresponding values $H_S(E_{con})$ and compare to the optimal edge set $E^\star$ and $H_S^\star$.
table for $H_S(E_{con})$
\begin{table} 
\scalebox{0.9}{
  \begin{tabular}{ | l || c | r || c | r |}
    \hline
    k & $E_{con}$ & $H_S(E_{con})$ & $E^\star$ & $H_S^\star$\\ \hline \hline
    1 & \{(1,7)\} & 1.6503 & \{(1,7)\} & 1.6503\\ \hline
    2 & \{(1,7), (3,5)\} & 1.4757 & \{(1,5), (3,7)\} & 1.4757\\ \hline
    3 & \{(1,7), (3,5), (1,6)\} & 1.3660 & \{(1,5), (3,6), (2,7)\} & 1.3571 \\
     \hline
  \end{tabular}
  }
  \caption{Edge sets $\Econ$ and $E^\star$ and their corresponding $H_S$ values.}
  \label{HS.table}
\end{table}
 
 \balance 
\section{Conclusion} \label{concl.sec}
We have considered the problem of  how to best connect disjoint subgraphs to optimize the coherence of the composite graph. For systems with noisy consensus dynamics, we have derived several expressions and bounds for the coherence of composite graphs. For systems with stubborn agent dynamics we presented a non-combinatorial algorithm for choosing edges which, when added to the network, closely approximate the performance of the optimal edge set of the same size. Finally, we have demonstrated the performance of this algorithm in numerical examples.

In future work, we plan to investigate analytical expressions for the coherence of composite networks with stubborn agent dynamics, similar to those we derived for composite networks with noisy consensus dynamics.  We also plan to explore the design of composite graphs under additional dynamics and performance measures.

\section*{APPENDIX}
\begin{corollary}
The lower bound for the coherence of any composite graph with $|V_B|=n$ and $|V_i| = |V_j|=m$ for all $i,j \leq n$ is: 
\begin{equation}
H_C(G) \geq \frac{1}{2N} \Big( n(m-1) + 2m^2(n-1) + 2n(n-1)(m-1)\Big).
\end{equation}
\end{corollary}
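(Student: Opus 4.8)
The plan is to specialize the coherence formula~(\ref{cohEq}) from Theorem~\ref{coh.thm} to the uniform case $n_i = |V_i| = m$, where $N = nm$ and $|V - V_i| = m(n-1)$, and then to bound each of its three summands below by its graph-theoretic minimum. Using~(\ref{resToCoh}) to rewrite $2 n_i H_C(G_i) = \Omega_i$, the three contributions become $\sum_{i=1}^n \Omega_i$, the weighted inter-bridge term $m^2 \sum_{i<j} r(l_i,l_j)$, and the centrality term $m(n-1)\sum_{i=1}^n C_i(l_i)$. I would minimize these contributions separately, since the subgraph topologies, the choices of bridge nodes, and the backbone topology are free parameters that do not constrain one another.

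For the first and third terms I would invoke Rayleigh's monotonicity law: adding edges never increases any pairwise resistance distance, so among all connected graphs on $m$ vertices the complete graph $K_m$ simultaneously minimizes every $r(u,v)$, and hence minimizes both $\Omega_i$ and every resistance centrality $C_i(l_i)$ regardless of which node is chosen as $l_i$. In $K_m$ each pairwise resistance equals $2/m$, giving $\Omega_i \ge \binom{m}{2}\tfrac{2}{m} = m-1$ and $C_i(l_i) \ge (m-1)\tfrac{2}{m} = \tfrac{2(m-1)}{m}$. Summing over $i$ yields $\sum_i \Omega_i \ge n(m-1)$ and $m(n-1)\sum_i C_i(l_i) \ge 2n(n-1)(m-1)$, which reproduce the first and third terms of the claimed bound.

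For the inter-bridge term I would use that each bridge node $l_i$ is a cut vertex of $G$: by Lemma~\ref{cutpoint}, no current flowing between $l_i$ and $l_j$ enters the pendant subgraphs, so $r(l_i,l_j)$ is exactly the resistance between $l_i$ and $l_j$ computed in the backbone $B$ alone. Consequently $\sum_{i<j} r(l_i,l_j) = \Omega_B$, the effective resistance of $B$, and the weighted term collapses to $m^2\,\Omega_B$. Minimizing over admissible (connected) backbones on $n$ nodes then supplies the remaining contribution, after which I would recombine the three bounds and divide by $2N$.

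The main obstacle is precisely this backbone term. The first two terms decouple cleanly into independent per-subgraph minimizations that are settled by Rayleigh monotonicity, but the inter-bridge resistances are coupled through a single connected backbone, so one must identify the extremal backbone topology that minimizes $\Omega_B$ and verify the resulting constant against the $2m^2(n-1)$ appearing in the statement. This is the one step where the sharpness of the bound—rather than its mere form—is at stake, and care is needed to confirm that the chosen extremal backbone is compatible with the complete-subgraph minimizers used for the other two terms.
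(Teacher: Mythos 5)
Your strategy is essentially the paper's: both arguments identify the all-complete configuration (complete backbone $K_n$, complete subgraphs $K_m$) as the minimizer of the coherence formula and evaluate it there. The paper merely asserts that maximal connectivity minimizes every resistance term; your term-by-term decomposition, the Rayleigh-monotonicity justification for the subgraph terms, and the observation that the bridge-to-bridge resistances reduce to resistances in $B$ alone (each $G_i$ hangs off $B$ at the single cut vertex $l_i$, so no current enters it) make the same argument rigorous. The three minimizations are indeed compatible, since complete subgraphs and a complete backbone can be realized simultaneously, so the route is the same, just better justified.

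The one step you defer --- verifying the backbone constant --- is exactly where the stated bound fails to match. Completing your computation, $\Omega_{K_n} = \binom{n}{2}\cdot\frac{2}{n} = n-1$, so the inter-bridge term contributes $m^2(n-1)$, not the $2m^2(n-1)$ appearing in the statement. The paper's own derivation produces the same quantity, $\frac{2n}{n^2}\, m^2\, \frac{n(n-1)}{2} = m^2(n-1)$, but then miswrites it as $2m^2(n-1)$ in the final simplification; since enlarging a lower bound strengthens the claim, the printed inequality is actually violated by the all-complete composite graph itself. Your method, carried to completion, yields the corrected bound $H_C(G) \ge \frac{1}{2N}\bigl(n(m-1) + m^2(n-1) + 2n(n-1)(m-1)\bigr)$; you should finish that computation and report the factor-of-two discrepancy rather than trying to force agreement with the stated constant.
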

\begin{proof}
To find the lower bound of $H_C(G)$, we need to find the backbone graph structure that minimizes the coherence of $G$.

The coherence of a composite graph with a complete graph backbone $B$, $|V_B|=n$, is derived from (\ref{cohEq}), using the fact that in a complete graph, $r(l_i,l_j)$ is the same for all $1 \leq i,j \leq n$, and $r(l_i,l_j) = \frac{2n}{n^2}$. The coherence of a complete graph is then:
\begin{align}
\label{complete}
H_C(G) &=  \frac{1}{2N}\Big( \sum_{i=1}^n 2n_iH_C(G_i) +   \frac{2n}{n^2}\sum_{i=1}^n\sum_{j=i+1}^n|V_i||V_j| \nonumber \\
&+ \sum_{i=1}^n |V - V_i| C_i(l_i) \Big).
\end{align}

Clearly the best way to minimize the resistance distance across all edges of the backbone graph is to connect every vertex to every other vertex, resulting in the lowest resistance distance for each edge in the backbone graph. Therefore the composite graph with $|V_B|=n$ and $|V_i|=m$ for all subgraphs $G_i$ will have the highest connectivity when $B$ is a complete graph of size $n$ and each $G_i$ is a complete subgraph of size $m$.

To calculate the effective resistance of this graph $G$, beginning from (\ref{complete}), we substitute $m-1$ for $2n_iH_C(G_i)$, $m$ for $|V_i|$, $|V-V_i|=m(n-1)$, and $(m-1)\frac{2m}{m^2}$ for $C_i(l_i)$ to get: 
\begin{align*}
 &H_C(G)  =  \frac{1}{2N} \Big( \sum_{i=1}^n m-1 +  \frac{2n}{n^2}\sum_{i=1}^n\sum_{j=i+1}^n m^2 \\
&+ \sum_{i=1}^n m(n-1) (m-1)\frac{2m}{m^2} \Big) \\
&= \frac{1}{2N} \Big( n(m-1) + \frac{2n}{n^2}m^2 \frac{n(n-1)}{2} \\
&+ nm(n-1)(m-1)\frac{2m}{m^2} \Big)\\
&= \frac{1}{2N} \Big( n(m-1) + 2m^2(n-1) + 2n(n-1)(m-1)\Big).
\end{align*}
Any other composite graph $G'$ with $|V_i| = m$ and $|V_B|=n$ will therefore have 
\[
H_C(G') \geq \frac{1}{2N} \Big( n(m-1) + 2m^2(n-1) + 2n(n-1)(m-1)\Big).
\]
\end{proof}

\begin{corollary}
The upper bound for the coherence of any composite graph with $|V_B|=n$ and $|V_i| = |V_j|=m$ for all $i,j \leq n$ is: 
\begin{align*}
H_C(G) \leq& \frac{nm(m^2-1) }{12N} + \frac{nm^2(n^2-1)}{12N} \\
&+ \frac{nm^2 (m-1)(n-1)}{4N}.
\end{align*}
\end{corollary}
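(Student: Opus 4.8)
The plan is to exploit the fact that the three summands in the line-backbone formula (\ref{cohLine}) are \emph{separable}: the cross term $\sum_{i<j}(j-i)|V_i||V_j|$ depends only on the backbone topology, whereas the coherence term $\sum_i 2n_i H_C(G_i)$ and the centrality term $\sum_i |V - V_i| C_i(l_i)$ depend only on the subgraphs and the choice of bridge nodes. Hence the worst case over all composite graphs is obtained by maximizing each summand independently. The discussion preceding the corollary already establishes, via Lemma \ref{graphDist}, that among all backbone topologies the line graph maximizes the cross term (only trees achieve $r(l_i,l_j)=d(l_i,l_j)$, and the path maximizes the total pairwise distance among trees), so I may start from (\ref{cohLine}) and substitute $|V_i|=m$ and $|V - V_i|=m(n-1)$.

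For the backbone term, since all $|V_i|=m$ the ordering is irrelevant and $\sum_{i<j}(j-i)|V_i||V_j| = m^2\sum_{d=1}^{n-1} d(n-d) = m^2\,\frac{n(n^2-1)}{6}$, which after division by $2N$ contributes the middle term $\frac{nm^2(n^2-1)}{12N}$. For the coherence term I would show that, among all connected graphs on $m$ vertices, the effective resistance $\Omega_i = 2n_i H_C(G_i)$ is maximized by the path: by Rayleigh monotonicity adding an edge strictly decreases every resistance distance, so an extremizer must be a tree, and among trees $\Omega_i$ equals the Wiener index, which the path maximizes. Evaluating $\Omega_{P_m}=\sum_{d=1}^{m-1} d(m-d)=\frac{m(m^2-1)}{6}$ and dividing by $2N$ gives the first term $\frac{nm(m^2-1)}{12N}$.

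For the centrality term, the same monotonicity argument reduces the question to trees, on which $C_i(l_i)=\sum_{u\neq l_i} d(u,l_i)$; this single-vertex distance sum is maximized when the breadth-first layers from $l_i$ each contain exactly one vertex, i.e.\ when the subgraph is a path and $l_i$ is an endpoint, yielding distances $1,2,\ldots,m-1$ and $C_i(l_i)\le\frac{m(m-1)}{2}$. Then $\sum_i |V - V_i|C_i(l_i)\le m(n-1)\cdot n\cdot \frac{m(m-1)}{2}$, which divided by $2N$ gives the last term $\frac{nm^2(m-1)(n-1)}{4N}$. Summing the three contributions reproduces the claimed inequality; moreover, since a line backbone whose subgraphs are all copies of the path connected at an endpoint attains all three maxima simultaneously, the bound is tight.

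The main obstacle is the extremal content of the two subgraph steps: rigorously confirming that the path maximizes effective resistance and that a path endpoint maximizes resistance centrality over \emph{all} connected graphs on $m$ vertices. Lemma \ref{graphDist} is the key reduction tool, converting both resistance quantities into graph-distance quantities for which the path's optimality is classical; the remaining care is in verifying that these three term-wise maxima are jointly realizable, so that the upper bound is not only valid but sharp.
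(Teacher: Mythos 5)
Your proof is correct and follows essentially the same route as the paper's: maximize the three separable terms of (\ref{cohLine}) independently, identify the extremal configuration as a line backbone with path subgraphs whose bridge nodes are endpoints, and evaluate the resulting sums. The only difference is that you supply the extremal justifications the paper merely asserts---Rayleigh monotonicity to reduce to trees, Wiener-index maximality of the path for $\Omega_i$, and the BFS-layer bound on the single-vertex distance sum for $C_i(l_i)$---and you note explicitly that the three term-wise maxima are simultaneously attainable, so the bound is sharp.
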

\begin{proof}
Since the line composite graph has the highest coherence for any graph $G$, in order to maximize $H_C(G)$ we set each $G_i$ to be a line graph of size $m$ and each $l_i \in V_i$ to be an endpoint of the backbone graph $B$. We use the structural properties of a line graph to find that $C_i(l_i)= \sum_{i=1}^{m-1}i=\frac{m(m-1)}{2}$ and $H_C(G_i)= \frac{1}{2n_i}\sum_{i=1}^m \sum_{j=i+1}^m (j-i) = \frac{1}{12n_i}m(m^2-1)$. 
We now substitute $H_C(G_i)=\frac{1}{12 n_i}m(m^2-1)$, $|V-V_i|=m(n-1)$, $|V_i|=m$, and $C_i(l_i)=\frac{m(m-1)}{2}$ into (\ref{cohLine}) to get:
\begin{align*}
H_C(G) &= \\
=& \frac{1}{2N}\Big( \sum_{i=1}^n 2n_i\frac{1}{12n_i}m(m^2-1) + \sum_{i=1}^n \sum_{j=i+1}^n (j-i) m^2 \\
&+ \sum_{i=1}^n m(n-1)\cdot \frac{m(m-1)}{2} \Big). \\
\end{align*}
Simplifying further, we obtain:
\begin{align*}
&H_C(G) =\frac{1}{2N}\Big( \frac{1}{6}m(m^2-1)n + \frac{1}{6}n(n^2-1)m^2 \\
+& nm(n-1)\frac{m(m-1)}{2} \Big). \\
=& \frac{nm(m^2-1) }{12N} + \frac{nm^2(n^2-1)}{12N} + \frac{nm^2 (m-1)(n-1)}{4N}.
\end{align*}
Any other composite graph $G'$ with $|V_i| = m$ and $|V_B|=n$ which does not have both a line backbone graph and line subgraphs $G_i$ will therefore have 
\begin{align*}
H_C(G') \leq& \frac{nm(m^2-1) }{12N} + \frac{nm^2(n^2-1)}{12N} \\
&+ \frac{nm^2 (m-1)(n-1)}{4N}.
\end{align*}
\end{proof}

%







\bibliographystyle{IEEEtran}
\bibliography{grCons}

\end{document}